\documentclass[11pt]{article}

\usepackage[utf8]{inputenc}
\usepackage[T1]{fontenc}
\usepackage[english]{babel}

\usepackage[a4paper, top=2.5cm, bottom=2.5cm,
left=3cm, right=3cm, includeheadfoot]{geometry}

\usepackage{amsmath, amssymb, amsfonts, amsthm}

\usepackage{graphicx}
\usepackage{xcolor}

\usepackage{comment}
\usepackage{float}

\usepackage{hyperref}
\hypersetup{
	colorlinks=true,
	linkcolor=blue!60!black,
	citecolor=blue!60!black,
	urlcolor=blue!60!black
}

\newtheorem{thm}{Theorem}[section]
\newtheorem{defi}[thm]{Definition}
\newtheorem{rem}[thm]{Remark}
\newtheorem{prop}[thm]{Proposition}
\newtheorem{lemma}[thm]{Lemma}
\newtheorem{cor}[thm]{Corollary}

\newcommand{\floor}[1]{\left\lfloor #1 \right\rfloor}
\newcommand{\ceil}[1]{\left\lceil #1 \right\rceil}

\title{Christoffel Words as Extremal Structures in Collatz Dynamics}

\author{
	Carlos Fern\'andez\thanks{
		Departamento de Matem\'aticas,
		University of Oviedo,
		E-33007 Oviedo, Spain.\\
		\texttt{carlos@uniovi.es}}
	\and
	Santiago Ib\'a\~nez\thanks{
		Departamento de Matem\'aticas,
		University of Oviedo,
		E-33007 Oviedo, Spain.\\
		\texttt{mesa@uniovi.es}}
}

\date{July 2026}

\begin{document}
	
	\maketitle
	
	\begin{abstract}
		We study the combinatorial structure of parity sequences associated with the accelerated Collatz map with the goal of identifying extremal configurations and relating them to the existence of periodic orbits. To each finite sequence of an orbit, we associate a binary word whose ones encode the odd iterates, and we introduce a functional $C(d)$ on such words which provides an explicit expression for the iterates and characterizes possible periodic cycles. We define a natural rotation action on binary words, compatible with the cyclic structure of periodic orbits, and consider the functional $C_{\min}(d)$ as a canonical representative of each rotation class. In this setting, we formulate and solve a discrete optimization problem on the set of binary words of fixed length and prescribed density.
		
		We prove that Christoffel words are, up to rotation, the unique maximizers of $C_{\min}(d)$ on $D_{N,r}$, the set of binary words of length $N$ with exactly $r$ ones, thereby establishing a direct connection between the dynamics of the Collatz problem and the classical theory of balanced words. As a consequence, we obtain restrictions on the possible existence of nontrivial cycles and derive explicit bounds for the minimum element of an orbit in terms of its length and the proportion of odd iterates. These results show that the combinatorial structure of parity sequences imposes strong constraints on Collatz dynamics and suggest that extremal configurations are governed by classical objects from the combinatorics on words, exhibiting a pronounced structural rigidity.
	\end{abstract}
	
	\section{Introduction}
	
		The Collatz problem, despite its elementary formulation, conceals a rich combinatorial structure. In this paper we take a parity‑sequence viewpoint and relate the binary words that encode Collatz orbits to classical objects such as Christoffel words, which also appear in optimal scheduling, digital geometry, and Sturmian sequences.
	
	Let
	\begin{equation}
		\phi(x) = 
		\begin{cases}
			x/2, & x \equiv 0 \pmod 2,\\
			(3x+1)/2, & x \equiv 1 \pmod 2,
		\end{cases}
		\label{eq:funccollatz}
	\end{equation}
	be defined on the set of positive integers. The Collatz conjecture asserts that for every $x \in \mathbb{N}$ there exists $N \ge 0$ such that $\phi^{N}(x) = 1$. A classical approach to this problem is to
	study the possible existence of periodic orbits other than the trivial 2-cycle $1 \leftrightarrow 2$. Terras \cite{Terras1976} showed that $\phi^N(x)$ can be expressed explicitly in terms of the parity sequence $(d_1,\dots,d_N)$, where $d_i \in \{0,1\}$ records the parity of the $(i-1)$-th iterate: $d_i = 0$ if $\phi^{i-1}(x)$ is even, and $d_i = 1$ if it is odd. This encoding links the study of the dynamics to the combinatorial analysis of binary words.
	
	The dynamics of the Collatz map has been the subject of extensive investigation since the pioneering work of L.~Collatz in the 1930s. Early contributions by Terras and Everett studied stopping times and structural properties of the iteration \cite{Terras1976,Everett1977}, while Lagarias provided a systematic account of the problem and its variants \cite{Lagarias1985,Lagarias2010}. More recent advances include density bounds, probabilistic models, and almost-everywhere results; see, for instance, \cite{ApplegateLagarias2003,KrasikovLagarias2003,Tao2019,KontorovichLagarias2010}.
	
	From a combinatorial viewpoint, it is natural to encode orbits by binary words associated with the parity of the iterates. This perspective has been developed explicitly in recent work on parity vectors: Rajab \cite{Rajab2022} introduces characteristic numbers associated with parity vectors that share a similar flavour with the functional $C(d)$ studied here, while Rozier \cite{Rozier2025} analyzes order relations on binary words that are closely related to the local transformations we use in Section~5. This places the problem in the framework of combinatorics on words and symbolic dynamics, where classical objects such as Sturmian and Christoffel words naturally appear \cite{Lothaire2002,AlloucheShallit2003}.
	
	On the other hand, the analysis of nontrivial cycles has led to the introduction of functionals associated with parity sequences, which allow one to formulate the existence of periodic orbits in terms of precise arithmetic conditions. Such ideas are already implicit in the work of Terras \cite{Terras1976} and have been further developed from different points of view; see, for instance, \cite{Lagarias1985,Rajab2022}.
	
	Problems of this type, involving discrete optimization over binary words of fixed length and density, have been extensively studied in combinatorics on words, where Christoffel words and, more generally, Sturmian words appear as extremal configurations characterized by optimal balance and regularity properties \cite{AlloucheShallit2003,BerstelDeLuca2002,Lothaire2002}. Up to the recent contribution \cite{Knight2026}, a paper we discovered only after our work was completed, and to the best of our knowledge, this connection has not been systematically developed in the specific context of the Collatz problem. However, the explicit form of the functionals associated with parity sequences—in particular, their exponential dependence on the distribution of symbols—justifies the optimization problems that give rise to these structures.
		
	In this context, the present work lies at the intersection of the arithmetic dynamics of the Collatz map and the combinatorics on words. We introduce a functional $C(d)$ associated with parity sequences, define a rotation-invariant version $C_{\min}(d)$, and prove that Christoffel words are, up to rotation, the unique maximizers of $C_{\min}(d)$ on $D_{N,r}$, the set of binary words of length $N$ with exactly $r$ ones. As a consequence, we derive explicit restrictions on the existence of nontrivial periodic orbits, including the bound
	\begin{equation*}
		x \le \frac{1}{2^{N/r}-3}
	\end{equation*}
	(valid for $N/r > \log_2 3$, which is a necessary condition for the existence of a periodic orbit) on the minimum element, $x$, of a cycle of length $N$ with $r$ odd iterates, and show that no nontrivial cycle can satisfy $N \ge 2r$.
	
	The paper is organized as follows. In Section~2 we introduce parity sequences and the functional $C(d)$. Section~3 studies the rotation action and defines $C_{\min}(d)$. Christoffel words are recalled in Section~4. Local transformations and the induced partial order are analyzed in Section~5, and the combinatorial structure of minimizers is developed in Section~6. The main maximization theorem is proved in Section~7, and its consequences for Collatz dynamics are derived in Section~8.
	
	\section{Parity sequences}
	
		Let $\mathcal{D}_N = \{0,1\}^N$ denote the set of binary words of length $N$. Given $0 \le r \le N$, we define the subset
		\begin{equation*}
			\mathcal{D}_{N,r} = \left\{ d = (d_1,\dots,d_N) \in \mathcal{D}_N
			: \sum_{j=1}^N d_j = r \right\},
		\end{equation*}
		that is, the set of all binary words of length $N$ with exactly $r$ ones.
		
		Given $x \in \mathbb{N}$, consider the orbit $x, \phi(x), \phi^2(x), \dots, \phi^{N-1}(x)$, with $\phi$ as given in \eqref{eq:funccollatz}. We associate to it the binary word $d = (d_1,\dots,d_N)$ defined by
		\begin{equation*}
			d_i = \delta_{1(2)}\!\left(\phi^{i-1}(x)\right) \in \{0,1\},
		\end{equation*}
		where $\delta_{1(2)}(n)=0$, if $n$ is even, and $1$, if it is odd. In this way, each initial sequence of length $N$ of a Collatz orbit determines a binary word in $\mathcal{D}_{N,r_0(d)}$.
		
		For each $i = 0, \dots, N$, we define
		\begin{equation*}
			r_i(d) = \sum_{j=i+1}^{N} d_j,
		\end{equation*}
		with the convention $r_N(d) = 0$. Thus $r_0(d) = r$ is the total number of ones in $d$, and $r_i(d)$ counts the ones strictly to the right of position $i$.
	
		Following the classical formula of Terras \cite{Terras1976}, the $N$-th iterate of $\phi$ satisfies
	\begin{equation}\label{eq:Terras}
		\phi^N(x) = \frac{3^{r_0(d)}}{2^N}\,x
		+ \sum_{i=1}^{N} \frac{3^{r_i(d)}}{2^{N-i+1}}\,d_i.
	\end{equation}
	If $x$ belongs to a periodic orbit of period $N$, then
	$\phi^N(x) = x$, and \eqref{eq:Terras} gives
	\begin{equation}\label{eq:cycle}
		x = \frac{C(d)}{2^N - 3^{r_0(d)}},
	\end{equation}
	where
	\begin{equation}\label{eq:C}
		C(d) = \sum_{i=1}^{N} 2^{i-1}\, 3^{r_i(d)}\, d_i.
	\end{equation}
	This representation of the iterates in terms of the parity sequence
	appears already in \cite{Terras1976} and has been used extensively in
	the subsequent literature to translate dynamical properties of the
	Collatz map into arithmetic and combinatorial conditions on binary
	words; see also \cite{Lagarias1985,Rajab2022}.
	
	The quantity $r_i(d)$ admits a natural combinatorial interpretation:
	it counts the number of ones strictly to the right of position $i$,
	that is,
	\begin{equation*}
		r_i(d) = \#\{\, j \in I(d) : j > i \,\},
	\end{equation*}
	where $I(d) = \{\, i \in \{1,\dots,N\} : d_i = 1 \,\}$ is the set
	of positions of the ones in $d$. In particular, $r_i(d)$ can be
	interpreted as the number of odd iterates remaining after step $i$,
	so that the functional $C(d)$ depends on the distribution of the ones
	in $d$ in a global, rather than local, manner.
	
	\section{Rotations}
	
	Suppose that the binary word $d \in \mathcal{D}_{N,r}$ arises from
	a periodic orbit of $\phi$ of period $N$. Since the choice of base
	point along the orbit is not canonical, it is natural to introduce
	an action on $\mathcal{D}_{N,r}$ that models this change of
	reference. We define the \emph{rotation operator}
	$\tau : \mathcal{D}_N \to \mathcal{D}_N$ by
	\begin{equation*}
		\tau(d_1, d_2, \dots, d_N) = (d_2, d_3, \dots, d_N, d_1),
	\end{equation*}
	and denote by $\tau^j$ its $j$-th iterate, with $\tau^0 = \mathrm{id}$.
	The map $\tau$ is a bijection of order dividing $N$, so in particular
	$\tau^N = \mathrm{id}$, and it defines an action of the cyclic group
	$\mathbb{Z}/N\mathbb{Z}$ on $\mathcal{D}_{N,r}$.
	
	If $x$ belongs to a periodic orbit of period $N$ with associated
	parity word $d$, then choosing a different base point along the orbit
	yields the word $\tau^j(d)$ for some $j \in \{0, \dots, N-1\}$.
	Consequently, the words associated with a given cycle form a single
	orbit under the action of $\tau$, and any quantity intended to
	describe intrinsic properties of the cycle must be invariant under
	this action.
	
	The functional $C(d)$ defined in \eqref{eq:C}, however, depends on
	the choice of base point. It is therefore natural to pass to the
	rotation-invariant quantity
	\begin{equation}\label{eq:Cmin}
		C_{\min}(d) = \min_{0 \le j \le N-1} C(\tau^j(d)),
	\end{equation}
	which is well defined on rotation classes and provides a canonical
	representative of the equivalence class of $d$ under $\tau$.
	
	\begin{rem}
		From a dynamical viewpoint, $C_{\min}(d)$ corresponds to choosing
		the base point in the orbit for which the numerator $C(d)$ in the
		expression
		\begin{equation*}
			x = \frac{C(d)}{2^N - 3^r}
		\end{equation*}
		is minimized. Note that $2^N \ne 3^r$ for all $N, r \in \mathbb{N}$ with $r \ge 1$,
		since $\log_2 3$ is irrational, so the denominator is always nonzero. This is a standard procedure in the study of the
		Collatz problem, where quantities associated with parity vectors
		depend on the choice of base point and it is natural to consider
		extremal representatives to obtain well-defined invariants; see,
		for example, \cite{Terras1976,Lagarias1985}.
		
		Now, observe that the expression $x = \frac{C(d)}{2^{N} - 3^{r}}$ can be considered for any binary word $d \in \mathcal{D}_{N,r}$, regardless of whether it actually comes from a periodic orbit of the Collatz map. In particular, if the corresponding value of $x$ is not a positive integer, we must conclude that there is no periodic Collatz
		orbit whose parity pattern is encoded by $d$.
		
		On the other hand, it is clear that $C$ induces an order on $\mathcal{D}_{N,r}$. The question we ask is therefore the following: does there exist a pair $(N,r)$ for which
		\begin{equation*}
			\max_{d \in \mathcal{D}_{N,r}} C_{\min}(d) < 2^{N} - 3^{r}?
		\end{equation*}
		In other words, we seek to characterize those pairs $(N,r)$ for which the quotient
		\begin{equation*}
			\frac{C_{\min}(d)}{2^{N} - 3^{r}}
		\end{equation*}
		is always strictly less than one for every $d \in \mathcal{D}_{N,r}$, and hence no periodic orbit with that parity structure can exist.
	\end{rem}
	
	From a combinatorial viewpoint, the problem consists in studying the
	behaviour of $C_{\min}(d)$ over $\mathcal{D}_{N,r}$. This leads to
	a discrete optimization problem: for fixed $N$ and $r$, determine
	which binary words in $\mathcal{D}_{N,r}$ maximize $C_{\min}(d)$.
	The density $r/N$ plays a particularly relevant role, as it can be
	interpreted as the slope associated with the word, in the sense of
	the theory of Sturmian and Christoffel words
	\cite{Lothaire2002,AlloucheShallit2003}.
	
	\section{Christoffel words}
	
	The Christoffel word of parameters $(N,r)$ can be defined by means
	of several equivalent conventions. In this work we adopt the
	following.
	
	\begin{defi}\label{def:Christoffel}
		Let $N \ge 1$ and $0 \le r \le N$. The \emph{Christoffel word} of
		parameters $(N,r)$ is the binary word $d^{\mathrm{chr}}_{N,r} =
		(d_1, \dots, d_N) \in \mathcal{D}_{N,r}$ defined by
		\begin{equation}\label{eq:Christoffel}
			d_i = \ceil{\frac{i\,r}{N}} - \ceil{\frac{(i-1)\,r}{N}},
			\qquad i = 1, \dots, N.
		\end{equation}
	\end{defi}
	
	This definition is equivalent, up to a rotation (and, in some
	cases, an elementary reflection), to the more standard definition
	based on the floor function; see \cite{Lothaire2002}. We adopt
	\eqref{eq:Christoffel} because it is well suited to the analysis
	of the positions of the ones carried out in Sections~6 and~7.
	
	These words provide a balanced distribution of the ones along the word, characterized by optimal regularity properties with respect to the density $r/N$. More precisely, Christoffel words are the unique balanced words of given length and density, up to rotation \cite{AlloucheShallit2003}. Recall that a binary word is \emph{balanced} if for any two factors or subsequences of the same length, the numbers of ones differ by at most one. They constitute a classical object in combinatorics on words and symbolic dynamics; see \cite{AlloucheShallit2003,Lothaire2002,BerstelDeLuca2002} for systematic accounts.
	
	In the context of the Collatz problem, where the dynamics can be
	encoded by parity vectors \cite{Terras1976,Rajab2022}, it is
	natural to expect that such balanced structures play a relevant
	role in the study of functionals such as $C(d)$. Since $C(d)$
	depends on the distribution of the ones in a non-local manner,
	optimal configurations must avoid local concentrations of ones,
	favouring distributions that are as uniform as possible. This
	leads naturally to Christoffel words as the candidates to maximize
	$C_{\min}(d)$ within each class $\mathcal{D}_{N,r}$.
	
	The main result of this paper, proved in Section~7, confirms this
	expectation: Christoffel words are, up to rotation, the unique
	maximizers of $C_{\min}(d)$ on $\mathcal{D}_{N,r}$.
	
	\section{Local transformations}
	
	We now study the behaviour of $C(d)$ under local transformations
	of the binary word. We begin with a preliminary result.
	
	\begin{prop}\label{prp:concat}
		Let $d_1 \in \mathcal{D}_{N_1}$ and $d_2 \in \mathcal{D}_{N_2}$.
		Then
		\begin{equation}\label{eq:concat}
			C([d_1, d_2]) = 3^{r_0(d_2)}\,C(d_1) + 2^{N_1}\,C(d_2).
		\end{equation}
	\end{prop}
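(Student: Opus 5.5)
The plan is to expand the definition \eqref{eq:C} for the concatenated word $d=[d_1,d_2]\in\mathcal{D}_{N_1+N_2}$ and split the sum over positions $i$ into the ranges $1\le i\le N_1$ and $N_1+1\le i\le N_1+N_2$. To avoid clashing with the subscript notation, write $d_1=(a_1,\dots,a_{N_1})$ and $d_2=(b_1,\dots,b_{N_2})$, so that $d_i=a_i$ for $i\le N_1$ and $d_{N_1+k}=b_k$ for $1\le k\le N_2$.

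The key step is to express $r_i(d)$ in terms of the corresponding quantities for the pieces. For $i\le N_1$ the ones strictly to the right of position $i$ in $d$ are those of $d_1$ to the right of $i$ together with all ones of $d_2$. Hence $r_i(d)=r_i(d_1)+r_0(d_2)$. For $i=N_1+k$ with $1\le k\le N_2$, only ones of $d_2$ lie to the right, so $r_i(d)=r_k(d_2)$. Both identities are immediate from the description $r_i(d)=\#\{j\in I(d): j>i\}$.

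Substituting, the first block of the sum becomes
\[
\sum_{i=1}^{N_1}2^{i-1}3^{r_i(d_1)+r_0(d_2)}a_i=3^{r_0(d_2)}C(d_1).
\]
For the second block, the shift $i=N_1+k$ gives $2^{i-1}=2^{N_1}2^{k-1}$, so it equals
\[
2^{N_1}\sum_{k=1}^{N_2}2^{k-1}3^{r_k(d_2)}b_k=2^{N_1}C(d_2).
\]
Adding the two blocks gives \eqref{eq:concat}.

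There is no real obstacle. The only point requiring care is the bookkeeping of indices: the exponent of $3$ in the left block picks up the full count $r_0(d_2)$, and the exponent of $2$ in the right block picks up the shift $N_1$. The degenerate cases $N_1=0$ or $N_2=0$, with $C$ of the empty word equal to $0$, are trivially consistent with the formula.
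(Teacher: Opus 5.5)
Your proposal is correct and follows the paper's proof essentially line for line: you split the sum at $N_1$, use $r_i(d)=r_i(d_1)+r_0(d_2)$ for $i\le N_1$ and $r_{N_1+k}(d)=r_k(d_2)$ for the right block, and factor out $3^{r_0(d_2)}$ and $2^{N_1}$ respectively. Your renaming of the letters to $a_i,b_k$ and your check of the empty-word cases are small improvements in clarity. The empty-word check is useful, since Proposition~\ref{prp:swap} implicitly applies the formula with $d_1$ or $d_2$ possibly empty.
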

	
	\begin{proof}
		Let $d = [d_1, d_2]$ be the concatenation of both words. Separating
		the sum defining $C(d)$ into the contributions of $d_1$ and $d_2$,
		we obtain
		\begin{equation*}
			C(d) = \sum_{i=1}^{N_1} 2^{i-1} 3^{r_i(d)}\,d_i
			+ \sum_{i=N_1+1}^{N_1+N_2} 2^{i-1} 3^{r_i(d)}\,d_i.
		\end{equation*}
		For $i \le N_1$ we have
		\begin{equation*}
			r_i(d) = r_i(d_1) + r_0(d_2),
		\end{equation*}
		while for $i > N_1$, writing $j = i - N_1$, we have
		\begin{equation*}
			r_i(d) = r_j(d_2).
		\end{equation*}
		Substituting into the expression above gives
		\begin{equation*}
			\sum_{i=1}^{N_1} 2^{i-1} 3^{r_i(d)}\,d_i
			= 3^{r_0(d_2)} \sum_{i=1}^{N_1} 2^{i-1} 3^{r_i(d_1)}\,d_i
			= 3^{r_0(d_2)}\,C(d_1),
		\end{equation*}
		and
		\begin{equation*}
			\sum_{i=N_1+1}^{N_1+N_2} 2^{i-1} 3^{r_i(d)}\,d_i
			= 2^{N_1} \sum_{j=1}^{N_2} 2^{j-1} 3^{r_j(d_2)}\,d_j
			= 2^{N_1}\,C(d_2).
		\end{equation*}
		Therefore $C([d_1,d_2]) = 3^{r_0(d_2)}\,C(d_1) + 2^{N_1}\,C(d_2)$.
	\end{proof}
	
	The concatenation formula \eqref{eq:concat} allows us to compare
	words that differ only in the order of appearance of certain
	symbols, and will be used to establish an order under local
	transpositions.
	
	\begin{prop}\label{prp:swap}
		Let $d_1 \in \mathcal{D}_{N_1}$, $d_2 \in \mathcal{D}_{N_2}$, and
		let
		\begin{equation*}
			d = [d_1, 1, 0, d_2], \qquad d' = [d_1, 0, 1, d_2].
		\end{equation*}
		Then $C(d) < C(d')$.
	\end{prop}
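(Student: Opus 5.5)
We prove the claim by applying the concatenation formula of Proposition~\ref{prp:concat} twice. This splits both $d$ and $d'$ into three blocks: the prefix $d_1$, the two-letter middle block, and the suffix $d_2$. Write $s = r_0(d_2)$. Grouping the word as $[d_1, m, d_2]$ with $m \in \{(1,0),(0,1)\}$ and applying \eqref{eq:concat} first to $[d_1,[m,d_2]]$ and then to $[m,d_2]$ gives
\begin{equation*}
	C([d_1,m,d_2]) = 3^{r_0(m)+s}\,C(d_1) + 2^{N_1}\bigl(3^{s}\,C(m) + 4\,C(d_2)\bigr).
\end{equation*}
Both middle blocks satisfy $r_0(m)=1$. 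So the terms $3^{1+s}C(d_1)$ and $2^{N_1+2}C(d_2)$ are identical for $d$ and $d'$, and
\begin{equation*}
	C(d') - C(d) = 2^{N_1}\,3^{s}\,\bigl(C(0,1) - C(1,0)\bigr).
\end{equation*}

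It remains to evaluate the functional on the two words of length $2$ directly from \eqref{eq:C}. For $(1,0)$ only $i=1$ contributes, with $r_1=0$, so $C(1,0)=2^0\cdot 3^0=1$. For $(0,1)$ only $i=2$ contributes, with $r_2=0$, so $C(0,1)=2^1\cdot 3^0=2$. Hence
\begin{equation*}
	C(d')-C(d) = 2^{N_1}\,3^{s} > 0,
\end{equation*}
which proves $C(d) < C(d')$. The degenerate cases $N_1=0$ or $N_2=0$ (empty $d_1$ or $d_2$) are covered by the same formula, using $C(\emptyset)=0$ and $r_0(\emptyset)=0$. Alternatively, one can check directly that \eqref{eq:concat} holds trivially when one factor is empty.

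The only delicate step is the bookkeeping in the double application of \eqref{eq:concat}. One must check that the exponent of $3$ multiplying $C(d_1)$ is the total number of ones in $[m,d_2]$, namely $1+s$ in both cases, so that this term cancels exactly. One must also check that the power of $2$ in front of $C(d_2)$ is $2^{N_1+2}$ in both cases. Once these cancellations are confirmed, the difference reduces to the two-letter computation above, and the strict inequality is immediate.
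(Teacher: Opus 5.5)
Your proof is correct and is essentially the paper's argument: two applications of the concatenation formula of Proposition~\ref{prp:concat}, then the direct evaluations $C(1,0)=1$ and $C(0,1)=2$, giving $C(d')-C(d)=2^{N_1}3^{r_0(d_2)}>0$. The only differences are cosmetic. You associate the word as $[d_1,[m,d_2]]$, whereas the paper first splits off $d_2$ and then $d_1$. Your explicit treatment of empty $d_1$ or $d_2$ is a small addition the paper leaves implicit.
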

	
	\begin{proof}
		Applying the concatenation formula \eqref{eq:concat}, we have
		\begin{equation*}
			C([d_1, 1, 0, d_2]) = 3^{r_0(d_2)}\,C([d_1,1,0]) + 2^{N_1+2}\,C(d_2),
		\end{equation*}
		and analogously
		\begin{equation*}
			C([d_1, 0, 1, d_2]) = 3^{r_0(d_2)}\,C([d_1,0,1]) + 2^{N_1+2}\,C(d_2).
		\end{equation*}
		It therefore suffices to compare $C([d_1,1,0])$ and $C([d_1,0,1])$.
		Applying \eqref{eq:concat} again,
		\begin{equation*}
			C([d_1,1,0]) = 3\,C(d_1) + 2^{N_1}\,C([1,0]),
			\qquad
			C([d_1,0,1]) = 3\,C(d_1) + 2^{N_1}\,C([0,1]).
		\end{equation*}
		Since $C([1,0]) = 1$ and $C([0,1]) = 2$, we obtain
		\begin{equation*}
			C([d_1,0,1]) - C([d_1,1,0]) = 2^{N_1} > 0.
		\end{equation*}
		Multiplying by $3^{r_0(d_2)} > 0$ gives
		\begin{equation*}
			C(d') - C(d) = 2^{N_1}\,3^{r_0(d_2)} > 0,
		\end{equation*}
		which completes the proof.
	\end{proof}
	
	Proposition~\ref{prp:swap} shows that moving a one to the right
	increases the value of $C$. Consequently, configurations that
	minimize the functional tend to concentrate the ones in earlier
	positions. Moreover, it induces a partial order on the set of
	binary words compatible with the distribution of the ones, in the
	spirit of recent work on parity vectors \cite{Rozier2025}.
	
	This local behaviour is consistent with the global structure of
	Christoffel words, which distribute the ones in a balanced manner
	and therefore appear as natural candidates to maximize $C_{\min}(d)$
	over $\mathcal{D}_{N,r}$.
	
	This partial order will be the key tool in Section~6, where we
	show that any word in $\mathcal{D}_{N,r}$ can be connected to
	the Christoffel word by a finite sequence of $10\to 01$
	transpositions.
	
	\section{Combinatorial structure}
	
	It is natural to study the average position of the ones in a binary
	word. For $d \in \mathcal{D}_{N,r}$ we define
	\begin{equation}\label{eq:mu}
		\mu(d) = \frac{1}{N}\sum_{j=1}^{N} j\, d_j.
	\end{equation}
	The quantity $\mu(d)$ measures the mean position of the ones in $d$.
	Minimizing $\mu$ is therefore equivalent to concentrating the ones
	as far to the left as possible. The key observation connecting $\mu$
	to the functional $C$ is that the rotation minimizing $\mu$ within
	a rotation class is precisely the one that places the ones furthest
	to the left, and this will turn out to be the rotation for which
	$C$ is smallest; working with $\mu$ is more tractable because it
	depends linearly on the positions of the ones, whereas $C$ depends
	on them exponentially.
	
	Rather than restricting to a single rotation class, we consider the
	minimization of $\mu$ over the full set $\mathcal{D}_{N,r}$. This
	relaxation allows us to work in a simpler framework, decoupling the
	analysis from the rotation action and revealing the underlying
	combinatorial structure.
	
	Given $d \in \mathcal{D}_{N,r}$, there exists a rotation
	$d^c = \tau^j(d)$ such that
	\begin{equation}\label{eq:mumin}
		\mu(d^c) = \min_{0 \le j < N} \mu(\tau^j(d)).
	\end{equation}
	The idea is to show that, starting from $d^c$ and applying
	$10 \to 01$ transpositions, we can reach the Christoffel word
	$d^{\mathrm{chr}}_{N,r}$. We begin with the following lemma.
	
	\begin{lemma}\label{lem:balance}
		Let $d \in \mathcal{D}_{N,r}$ and let $d^c = \tau^j(d)$ be a
		rotation of $d$ such that
		\begin{equation*}
			\mu(d^c) = \min_{0 \le j < N} \mu(\tau^j(d)).
		\end{equation*}
		Then
		\begin{equation}\label{eq:balance}
			\frac{1}{N-k}\sum_{j=k+1}^{N} d_j^c \le \frac{r}{N},
			\qquad k = 0, \dots, N-1.
		\end{equation}
	\end{lemma}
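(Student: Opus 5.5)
Fix $k\in\{0,\dots,N-1\}$ and compare $\mu(d^c)$ with $\mu(\tau^k(d^c))$, which is again a rotation of $d$. The latter moves the suffix $(d^c_{k+1},\dots,d^c_N)$ to the front and the prefix $(d^c_1,\dots,d^c_k)$ to the back. Minimality of $\mu(d^c)$ gives $\mu(d^c)\le\mu(\tau^k(d^c))$, and this inequality should rearrange into \eqref{eq:balance}. For $k=0$ the claim reads $r/N\le r/N$, so assume $1\le k\le N-1$.

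Write $s=\sum_{j=k+1}^N d^c_j$ for the number of ones in the suffix, so the prefix contains $r-s$ ones. In $\tau^k(d^c)$, a one at position $j>k$ of $d^c$ moves to position $j-k$. A one at position $j\le k$ moves to position $j+N-k$. Hence
\begin{equation*}
N\mu(\tau^k(d^c)) - N\mu(d^c) = (N-k)(r-s) - k s = (N-k)r - N s .
\end{equation*}
Minimality makes this quantity nonnegative, so $Ns\le (N-k)r$, which is exactly $\frac{1}{N-k}\sum_{j=k+1}^N d^c_j\le \frac rN$.

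The only point needing care is the index bookkeeping for $\tau^k$. One must check that the convention $\tau(d_1,\dots,d_N)=(d_2,\dots,d_N,d_1)$ indeed sends the entry at position $j$ to position $j-k$ modulo $N$, with representatives in $\{1,\dots,N\}$. The positional shifts of $-k$ for the suffix and $+(N-k)$ for the prefix then follow directly, and the identity above is a one-line computation. I expect no real obstacle beyond this verification.
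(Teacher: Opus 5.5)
Your proof is correct and is essentially the paper's argument: both compare $\mu(d^c)$ with $\mu(\tau^{k}(d^c))$ via the shift identity $N\mu(\tau^k(d^c)) - N\mu(d^c) = (N-k)(r-s) - ks$. The only difference is that you derive \eqref{eq:balance} directly from $\mu(d^c)\le\mu(\tau^k(d^c))$, while the paper presents the same computation as a proof by contradiction.
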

	
	\begin{proof}
		We first observe that the condition \eqref{eq:balance} is
		equivalent, for $k \ge 1$, to
		\begin{equation*}
			\frac{1}{k}\sum_{j=1}^{k} d_j^c \ge \frac{r}{N}.
		\end{equation*}
		Suppose for contradiction that there exists $k_0 \in \{1,\dots,N-1\}$
		such that
		\begin{equation*}
			\frac{1}{N-k_0}\sum_{j=k_0+1}^{N} d_j^c > \frac{r}{N},
		\end{equation*}
		and let
		\begin{equation*}
			\tau^{k_0}(d^c) = \left(d_{k_0+1}^c, \dots, d_N^c,
			d_1^c, \dots, d_{k_0}^c\right).
		\end{equation*}
		Then
		\begin{align*}
			\mu(\tau^{k_0}(d^c))
			&= \frac{1}{N}\sum_{j=k_0+1}^{N}(j-k_0)\,d_j^c
			+ \frac{1}{N}\sum_{j=1}^{k_0}(N-k_0+j)\,d_j^c \\
			&= \mu(d^c)
			- \frac{k_0}{N}\sum_{j=k_0+1}^{N} d_j^c
			+ \frac{N-k_0}{N}\sum_{j=1}^{k_0} d_j^c.
		\end{align*}
		By hypothesis,
		\begin{equation*}
			\sum_{j=k_0+1}^{N} d_j^c > \frac{r}{N}(N-k_0),
		\end{equation*}
		and since $\sum_{j=1}^{N} d_j^c = r$, we deduce that
		\begin{equation*}
			\sum_{j=1}^{k_0} d_j^c < \frac{r}{N}\,k_0.
		\end{equation*}
		Substituting gives
		\begin{equation*}
			\mu(\tau^{k_0}(d^c)) < \mu(d^c),
		\end{equation*}
		contradicting the minimality of $d^c$.
	\end{proof}
	
	The inequality \eqref{eq:balance} can be interpreted as a balance condition: no tail of the word has a density of ones exceeding the global density $r/N$. This is a characteristic property of optimal configurations.

    \begin{lemma}\label{lem:positions}
		Let $i_1^{\mathrm{chr}} < \cdots < i_r^{\mathrm{chr}}$ denote the positions of the ones in $d^{\mathrm{chr}}_{N,r}$. Then
		\begin{equation}\label{eq:pos_formula}
			i_j^{\mathrm{chr}} = \floor{\frac{(j-1)N}{r}} + 1, \qquad j = 1, \dots, r.
		\end{equation}
	\end{lemma}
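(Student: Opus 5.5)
The plan is to read off the positions of the ones directly from the defining formula \eqref{eq:Christoffel}. Set $F(i) = \ceil{ir/N}$ for $i = 0,\dots,N$. Then $d_i = F(i) - F(i-1)$, $F(0)=0$ and $F(N)=r$. Because $r \le N$, consecutive values satisfy $0 \le ir/N - (i-1)r/N \le 1$, so each step $F(i)-F(i-1)$ lies in $\{0,1\}$. Thus $F$ is a nondecreasing integer staircase climbing from $0$ to $r$ in unit steps. The $j$-th one of $d^{\mathrm{chr}}_{N,r}$ sits exactly where $F$ first reaches the value $j$:
\begin{equation*}
	i_j^{\mathrm{chr}} = \min\{\, i \in \{1,\dots,N\} : F(i) \ge j \,\},
\end{equation*}
and this is well defined for $1 \le j \le r$ because $F(N) = r$.

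Next I convert this into an explicit formula. For integer $j$, the inequality $\ceil{ir/N} \ge j$ is equivalent to $ir/N > j-1$, that is, to $i > (j-1)N/r$ (this uses $r \ge 1$; for $r=0$ the statement is vacuous). The least integer strictly greater than a real number $t \ge 0$ is $\floor{t}+1$, so
\begin{equation*}
	i_j^{\mathrm{chr}} = \floor{\frac{(j-1)N}{r}} + 1 .
\end{equation*}
It remains to confirm this lies in $\{1,\dots,N\}$. It is at least $1$ trivially. Since $j-1 \le r-1$, we have $(j-1)N/r \le N - N/r < N$, so the floor is at most $N-1$.

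The only delicate point is the equivalence $\ceil{x} \ge j \iff x > j-1$ for integer $j$. This is precisely why the ceiling convention in Definition~\ref{def:Christoffel} produces a floor-plus-one in \eqref{eq:pos_formula}, including the boundary case where $(j-1)N/r$ is itself an integer. I would state that equivalence explicitly, and note that the first-hitting characterization relies on the unit-step property established in the first step. The rest is routine.
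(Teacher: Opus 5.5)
Your argument is correct and follows essentially the same route as the paper. The paper also identifies $i_j^{\mathrm{chr}}$ as the first index at which $\ceil{ir/N}$ reaches $j$ and turns this into the least integer $i > (j-1)N/r$, namely $\floor{(j-1)N/r}+1$; you additionally justify the unit-step property (from $r\le N$) behind that first-hitting characterization and check that the index lies in $\{1,\dots,N\}$, both of which the paper leaves implicit.
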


    \begin{proof}
        By definition,
		\begin{equation*}
			i_k^{\mathrm{chr}} = \min\left\{ i:\ceil{\frac{ir}{N}}=k \right\}.
		\end{equation*}
		
		Since $\ceil{\frac{ir}{N}}=k$ is equivalent to 
		\begin{equation*}
			k-1<\frac{ir}{N}\le k,
		\end{equation*}
		the position $i_k^{\mathrm{chr}}$ is given by the smallest integer satisfying
		\begin{equation*}
			i>\frac{(k-1)N}{r}.
		\end{equation*}
		
		Therefore
		\begin{equation*}
			i_k^{\mathrm{chr}} = 1+\floor{\frac{(k-1)N}{r}}.
		\end{equation*}
    \end{proof}
    
	\begin{thm}[Position comparison]\label{teo:maxim}
		Let $d^c$ be a rotation of $d$ minimizing $\mu$ among all rotations
		of $d$, and let $d^{\mathrm{chr}}_{N,r}$ be the Christoffel word in
		$\mathcal{D}_{N,r}$. Denote by $i_k^c$ and $i_k^{\mathrm{chr}}$,
		for $k = 1,\dots,r$, the positions of the $k$-th one in $d^c$ and
		$d^{\mathrm{chr}}_{N,r}$, respectively. Then
		\begin{equation}\label{eq:positions_ineq}
			i_k^c \le i_k^{\mathrm{chr}}, \qquad k = 1,\dots,r.
		\end{equation}
	\end{thm}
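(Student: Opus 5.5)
The plan is to use the balance condition of Lemma~\ref{lem:balance}, in its prefix form, together with the explicit positions from Lemma~\ref{lem:positions}. For $1 \le m \le N$, write $S^c(m) = \sum_{j=1}^{m} d_j^c$ for the number of ones of $d^c$ among its first $m$ positions. As observed at the start of the proof of Lemma~\ref{lem:balance}, inequality \eqref{eq:balance} is equivalent, for $k \ge 1$, to
\begin{equation*}
	S^c(m) \ge \frac{mr}{N}, \qquad m = 1,\dots,N .
\end{equation*}
The target inequality $i_k^c \le i_k^{\mathrm{chr}}$ says exactly that the $k$-th one of $d^c$ occurs no later than position $i_k^{\mathrm{chr}}$. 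Equivalently, it says that $S^c(i_k^{\mathrm{chr}}) \ge k$. So I would reduce the theorem to checking this counting inequality at the single index $m = i_k^{\mathrm{chr}}$.

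Fix $k \in \{1,\dots,r\}$ and set $m = i_k^{\mathrm{chr}} = \floor{(k-1)N/r} + 1$, using \eqref{eq:pos_formula}. Since $m \le N$, this is an admissible index. The key step is that $m > (k-1)N/r$, because $\floor{x}+1 > x$. Multiplying by $r/N$ gives $mr/N > k-1$. The balance inequality then yields
\begin{equation*}
	S^c(m) \ge \frac{mr}{N} > k-1 .
\end{equation*}
Since $S^c(m)$ is an integer, this forces $S^c(m) \ge k$. Hence at least $k$ ones of $d^c$ lie in positions $1,\dots,m$, which gives $i_k^c \le m = i_k^{\mathrm{chr}}$.

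The only delicate point is the strictness. The balance inequality alone gives only $S^c(m) \ge mr/N$, which can be a non-integer. The argument therefore relies on the strict inequality $mr/N > k-1$ coming from the floor in Lemma~\ref{lem:positions}, combined with integrality of $S^c(m)$. I would also check the edge cases. The case $k=1$ is trivial, since $i_1^{\mathrm{chr}} = 1$ and $S^c(1) \ge r/N > 0$ forces $d_1^c = 1$. When $r = 0$ there is nothing to prove. Beyond this there is no real obstacle: the theorem is essentially a restatement of Lemma~\ref{lem:balance} in terms of positions rather than counts.
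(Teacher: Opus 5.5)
Your proof is correct and is essentially the paper's argument: both combine the balance condition of Lemma~\ref{lem:balance}, the explicit positions of Lemma~\ref{lem:positions}, and an integrality rounding step. The difference is only one of viewpoint. The paper applies the suffix form of \eqref{eq:balance} at the cut $m=i_k^c-1$, obtains $i_k^c\le (k-1)N/r+1$, and rounds this down; you apply the equivalent prefix form at $m=i_k^{\mathrm{chr}}$ and round the count of ones up.
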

	
	\begin{proof}
		Let $d^c=(d_1^c,\ldots,d_N^c)\in \mathcal{D}_{N,r}$ be a rotation
		minimizing $\mu$. By Lemma~\ref{lem:balance},
		\begin{equation*}
			\frac{1}{N-m}\sum_{j=m+1}^{N} d_j^c \le \frac{r}{N},
			\qquad m=0,\ldots,N-1.
		\end{equation*}
		
		Taking $m=i_k^c-1$, and observing that $(d_{i_k^c}^c,\ldots,d_N^c)$ contains exactly $r-k+1$ ones, we obtain
		\begin{equation*}
			\frac{r-k+1}{N-i_k^c+1}\le \frac{r}{N}.
		\end{equation*}
		
		Hence,
		\begin{equation*}
			N(r-k+1)\le r(N-i_k^c+1),
		\end{equation*}
		which yields
		
		Since $i_k^c$ is an integer,
		\begin{equation*}
			i_k^c \le 1+\floor{\frac{(k-1)N}{r}}.
		\end{equation*}
		
	Combining this with Lemma \ref{lem:positions}, we conclude \eqref{eq:positions_ineq}.
	\end{proof}
	
	Inequality \eqref{eq:positions_ineq} expresses that the ones
	in $d^c$ are, in a precise sense, shifted to the left relative to
	those of the Christoffel word. This comparison is consistent with
	the optimal balance of Christoffel words and will allow us to
	connect $d^c$ to $d^{\mathrm{chr}}_{N,r}$ by a chain of
	Proposition~\ref{prp:swap} transpositions.
	
	\begin{thm}[Connection by transpositions]\label{thm:dc_to_chr}
		The word $d^c$ can be transformed into $d^{\mathrm{chr}}_{N,r}$ by
		a finite sequence of $10 \to 01$ transpositions.
	\end{thm}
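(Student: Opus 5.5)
The plan is to use Theorem~\ref{teo:maxim} as the sole input and reduce the statement to a purely combinatorial fact about sorted position vectors. Write $i_1^c<\cdots<i_r^c$ and $i_1^{\mathrm{chr}}<\cdots<i_r^{\mathrm{chr}}$ for the positions of the ones. By \eqref{eq:positions_ineq}, $i_k^c\le i_k^{\mathrm{chr}}$ for every $k$. A $10\to 01$ transposition moves a single one exactly one step to the right into an adjacent empty cell. It therefore preserves the relative order of the ones, and it increases the position vector by $1$ in exactly one coordinate. We will induct on the nonnegative integer
\begin{equation*}
\Delta(d)=\sum_{k=1}^{r}\bigl(i_k^{\mathrm{chr}}-i_k\bigr),
\end{equation*}
where $i_k$ denotes the position of the $k$-th one of the current word $d$. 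The induction is carried out over the class of words satisfying $i_k\le i_k^{\mathrm{chr}}$ for all $k$.

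For the inductive step, suppose $\Delta(d)>0$, so the positions of $d$ do not all coincide with those of $d^{\mathrm{chr}}_{N,r}$. Choose the \emph{largest} index $k$ with $i_k<i_k^{\mathrm{chr}}$. We claim that position $i_k+1$ is empty in $d$.

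If $k=r$, then $i_r+1\le i_r^{\mathrm{chr}}\le N$, so the cell exists, and no one of $d$ lies to its right.

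If $k<r$, maximality gives $i_{k+1}=i_{k+1}^{\mathrm{chr}}$. We also have $i_k+1\le i_k^{\mathrm{chr}}<i_{k+1}^{\mathrm{chr}}=i_{k+1}$, so $i_k+1$ lies strictly between $i_k$ and $i_{k+1}$ and hence is a zero.

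Thus $d$ contains the factor $10$ at positions $(i_k,i_k+1)$. Applying the transposition yields $d'$ whose positions are those of $d$ except $i_k'=i_k+1$. The order of the ones is unchanged, $i_k'\le i_k^{\mathrm{chr}}$ still holds, and $\Delta(d')=\Delta(d)-1$.

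Starting from $d^c$ and iterating, after exactly $\Delta(d^c)$ steps we reach a word whose position vector equals that of $d^{\mathrm{chr}}_{N,r}$. Since a word in $\mathcal{D}_{N,r}$ is determined by the positions of its ones, this word is $d^{\mathrm{chr}}_{N,r}$. This gives an explicit finite sequence of $10\to01$ transpositions.

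The only genuinely delicate step is guaranteeing that a move is always available without overshooting or colliding with the next one. The choice of the largest deficient index, combined with the strict increase of the Christoffel positions (Lemma~\ref{lem:positions}, which gives $i_{k+1}^{\mathrm{chr}}>i_k^{\mathrm{chr}}$ trivially), is what resolves this. I would state that choice explicitly. As a byproduct, Proposition~\ref{prp:swap} gives $C(d^c)<C(d^{\mathrm{chr}}_{N,r})$ whenever $d^c\neq d^{\mathrm{chr}}_{N,r}$, which is presumably how the result will be used in Section~7.
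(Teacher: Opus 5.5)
Your proposal is correct and follows essentially the same route as the paper. Both arguments use Theorem~\ref{teo:maxim} as the only input and push the ones rightward one cell at a time, from the last one to the first; always choosing the largest deficient index reproduces exactly the paper's order of moves, while your potential $\Delta$ and the explicit emptiness check $i_k+1\le i_k^{\mathrm{chr}}<i_{k+1}^{\mathrm{chr}}=i_{k+1}$ make rigorous the paper's remark that no fixed one lies in the way.
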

	
	\begin{proof}
		We proceed by induction, constructing a sequence of words  $d^0, d^1, \dots, d^{r-1}$ with $d^0=d^c$.
        
        The last one in $d^0$ occupies the position $i_{r}^c$. By Theorem~\ref{teo:maxim} $i_{r}^c \leq i_{r}^{\mathrm{chr}}$. Therefore, we can move that last one at position $i_{r}^{\mathrm{chr}}$ to get $d^1$ by a sequence of $i_{r}^{\mathrm{chr}}-i_{r}^c$  transpositions.
        
        Assume that in the word $d^{k}$ the last $k$ ones already occupy their correct positions, that is, they appear at positions $i_{r-k+1}^{\mathrm{chr}}, \dots, i_r^{\mathrm{chr}}$. The remaining $r-k$ ones to the left occupy their original positions, those of the first $r-k$ ones in $d^c$. So, there is a one in $d^{k}$ at position $i_{r-k}^c$.
		
		By Theorem~\ref{teo:maxim}, $i_{r-k}^c \le i_{r-k}^{\mathrm{chr}}$. Moreover, between positions $i_{r-k}^c$ and $i_{r-k}^{\mathrm{chr}}$ there are no ones already fixed, since those occupy positions $i_{r-k+1}^{\mathrm{chr}}, \dots, i_r^{\mathrm{chr}}$, all strictly to the right of $i_{r-k}^{\mathrm{chr}}$. Therefore it is possible to move the one at position $i_{r-k}^c$ to position $i_{r-k}^{\mathrm{chr}}$ by a finite sequence of $10 \to 01$ transpositions, each of which moves it one step to the right without altering any other one. Define $d^{k+1}$ as the word obtained after these transpositions.
		
		Iterating for $k = 1,\dots,r-1$, we obtain $d^{r-1} = d^{\mathrm{chr}}_{N,r}$, which completes the induction.
    \end{proof}
	
	Theorem~\ref{thm:dc_to_chr} shows that $d^{\mathrm{chr}}_{N,r}$
	can be reached from $d^c$ by local transformations that
	progressively redistribute the ones in a more balanced way. This
	reveals the compatibility between the partial order induced by
	$10 \to 01$ transpositions and the structure of Christoffel words.
	
	\section{Main result}
	
	Before establishing the main result, we analyze the behaviour of
	the functional $C$ on Christoffel words as the length increases
	with the number of ones fixed. This controls the growth of $C$
	and will be essential in the final comparison.
	
	\begin{prop}\label{prop:monotone}
		For the Christoffel word $d_{N,r}^{\mathrm{chr}}$ of length $N$
		with $r$ ones, defined by \eqref{eq:Christoffel}, we have
		\begin{equation}\label{eq:growth}
			C(d_{N,r}^{\mathrm{chr}}) < C(d_{N+1,r}^{\mathrm{chr}})
			< 2\,C(d_{N,r}^{\mathrm{chr}}).
		\end{equation}
	\end{prop}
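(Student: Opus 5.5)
The plan is to compute $C(d^{\mathrm{chr}}_{N,r})$ in closed form using Lemma~\ref{lem:positions} and then compare the two sums term by term. If $d$ has its ones at positions $i_1<\cdots<i_r$, then $r_{i_j}(d)=r-j$, so \eqref{eq:C} becomes $C(d)=\sum_{j=1}^r 2^{i_j-1}3^{r-j}$. Lemma~\ref{lem:positions} then gives
\begin{equation*}
C(d^{\mathrm{chr}}_{N,r})=\sum_{j=1}^{r} 2^{a_j}\,3^{r-j},\qquad a_j=\floor{\frac{(j-1)N}{r}},
\end{equation*}
and similarly $C(d^{\mathrm{chr}}_{N+1,r})=\sum_{j} 2^{b_j}3^{r-j}$ with $b_j=\floor{(j-1)(N+1)/r}$.

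The key step is the elementary observation
\begin{equation*}
b_j=\floor{\frac{(j-1)N}{r}+\frac{j-1}{r}} \in \{a_j,\ a_j+1\},
\end{equation*}
which holds because $0\le (j-1)/r<1$. Hence $2^{a_j}\le 2^{b_j}\le 2\cdot 2^{a_j}$ for every $j$. Multiplying by $3^{r-j}$ and summing gives the non-strict chain $C(d^{\mathrm{chr}}_{N,r})\le C(d^{\mathrm{chr}}_{N+1,r})\le 2C(d^{\mathrm{chr}}_{N,r})$.

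For strictness I would treat each side separately.

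\emph{Right inequality.} The term $j=1$ has $a_1=b_1=0$, so it is not doubled. Therefore the right inequality is strict as soon as $r\ge1$.

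\emph{Left inequality.} Here one term must actually increase. I would look at $j=r$. Write $(r-1)N=qr+s$ with $0\le s<r$. Then $b_r=\floor{q+(s+r-1)/r}$, and this equals $a_r+1=q+1$ precisely when $s\ge1$. Since $(r-1)N\equiv -N \pmod r$, we have $s\ge 1$ exactly when $r\nmid N$.

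The main obstacle is that the strict left inequality genuinely fails in degenerate cases, so the statement needs those cases excluded or the inequality weakened there:
\begin{itemize}
\item If $r\mid N$, every fractional part of $(j-1)N/r$ is $0$, so $b_j=a_j$ for all $j$ and the two values of $C$ coincide. For example, $N=r=2$ gives the words $11$ and $110$, both with $C=5$.
\item If $r=0$, both values of $C$ are $0$, so neither inequality is strict.
\end{itemize}
In the proof I would therefore establish the non-strict chain in general, the strict right inequality for $r\ge1$, and the strict left inequality under $r\nmid N$. I would also remark that this hypothesis holds automatically in the regime relevant for cycles in Section~8, namely $r<N$ with $N/r$ not an integer, and otherwise state the left inequality as non-strict.
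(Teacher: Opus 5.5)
Your proof is correct, and so are your caveats: the statement as printed is false when $r\mid N$ and when $r=0$. For example, $d^{\mathrm{chr}}_{2r,r}=(10)^r$ and $d^{\mathrm{chr}}_{2r+1,r}=(10)^r0$ both give $C=4^r-3^r$.

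\textbf{Comparison of routes.} The two proofs differ in how the position comparison is obtained.
\begin{itemize}
\item The paper does not use Lemma~\ref{lem:positions} here. It works directly from Definition~\ref{def:Christoffel} with the counting function $c_k(N)=\ceil{kr/N}$. It shows $c_k(N+1)\le c_k(N)$ and $c_k(N+1)\ge c_{k-1}(N)$, and deduces $k_j(N)\le k_j(N+1)\le k_j(N)+1$ for the positions of the ones. This is the same sandwich as your $b_j\in\{a_j,a_j+1\}$.
\item Your closed-form route is shorter. It also tells you exactly which ones move, and that is what strictness requires.
\item The paper simply asserts ``with strict inequality for at least one $j$''. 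That is precisely the step that fails when $r\mid N$.
\item The paper's final display only yields $C(d^{\mathrm{chr}}_{N+1,r})\le 2\,C(d^{\mathrm{chr}}_{N,r})$. Your $j=1$ observation supplies the strict upper bound for $r\ge 1$, and your $j=r$ computation gives the strict lower bound exactly when $r\nmid N$.
\end{itemize}

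\textbf{Correction to your closing remark.} The case $r\mid N$ is not avoided in Section~8: Theorem~\ref{thm:NleII2r}(2) applies Corollary~\ref{cor:monotone} starting from $N=2r$. What keeps the downstream results intact is that Corollary~\ref{cor:monotone} uses only the upper inequality. Even its non-strict form suffices, because the strictness in that corollary comes from $2(2^N-3^r)<2^{N+1}-3^r$.
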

	
	\begin{proof}
		Let $c_k(N) = \ceil{kr/N}$. Then, the characters of the word $d_{N,r}^{\mathrm{chr}}$ can be written as $d_k = c_k(N) - c_{k-1}(N)$,
		and the ones appear exactly at the indices where $c_k(N)$
		increments. Denoting these indices by
		\begin{equation*}
			k_1(N) < \cdots < k_r(N),
		\end{equation*}
		we have
		\begin{equation*}
			C(d_{N,r}^{\mathrm{chr}}) = \sum_{j=1}^{r} 2^{k_j(N)-1}\,3^{r-j}.
		\end{equation*}
		We define the indices $k_j(N+1)$ analogously. Since
		$kr/(N+1) < kr/N$, we have $c_k(N+1) \le c_k(N)$. Both sequences
		are non-decreasing, take integer values in $\{0,\dots,r\}$, and
		can increase by at most one; therefore the positions of the
		increments shift to the right as $N$ increases to $N+1$
		\cite{Lothaire2002,AlloucheShallit2003}. Consequently,
		\begin{equation*}
			k_j(N) \le k_j(N+1),
		\end{equation*}
		with strict inequality for at least one $j$, which gives
		\begin{equation*}
			C(d_{N+1,r}^{\mathrm{chr}}) > C(d_{N,r}^{\mathrm{chr}}).
		\end{equation*}
		On the other hand, we claim that $c_k(N+1) \ge c_{k-1}(N)$, which
		implies $k_j(N+1) \le k_j(N)+1$. Indeed,
		$\frac{kr}{N+1} \ge \frac{(k-1)r}{N}$ is equivalent to
		$kN \ge (k-1)(N+1)$, that is, to $N+1 \ge k$, which holds for
		all $k \le N+1$. Therefore each increment position shifts by at
		most one, so
		\begin{equation*}
			C(d_{N+1,r}^{\mathrm{chr}})
			\le \sum_{j=1}^{r} 2^{k_j(N)}\,3^{r-j}
			= 2\,C(d_{N,r}^{\mathrm{chr}}). \qedhere
		\end{equation*}
	\end{proof}
	
	Proposition~\ref{prop:monotone} shows that $C$ grows in a
	controlled manner as the length increases; the following corollary
	translates this into a monotonicity property of the quantity
	characterizing cycles.
	
	\begin{cor}\label{cor:monotone}
		Assume $2^N > 3^r$, which holds whenever $N/r > \log_2 3$. Then
		\begin{equation}\label{eq:ratio_monotone}
			\frac{C(d_{N+1,r}^{\mathrm{chr}})}{2^{N+1} - 3^{r}}
			< \frac{C(d_{N,r}^{\mathrm{chr}})}{2^{N} - 3^{r}}.
		\end{equation}
	\end{cor}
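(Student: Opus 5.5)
The plan is to cross-multiply and reduce the inequality to the upper bound of Proposition~\ref{prop:monotone}. Write $A = C(d_{N,r}^{\mathrm{chr}})$ and $B = C(d_{N+1,r}^{\mathrm{chr}})$, and set $D = 2^N - 3^r$. The hypothesis gives $D > 0$. Then $2^{N+1} - 3^r = 2D + 3^r > 0$, so both denominators are positive and \eqref{eq:ratio_monotone} is equivalent to
\begin{equation*}
	B\,(2^N - 3^r) < A\,(2^{N+1} - 3^r).
\end{equation*}

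The right-hand side can be rewritten as $A(2^{N+1}-3^r) = 2A\,(2^N - 3^r) + A\,3^r$. The target inequality therefore becomes
\begin{equation*}
	(B - 2A)(2^N - 3^r) < A\,3^r .
\end{equation*}

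The two sides are then controlled separately.
\begin{itemize}
	\item Proposition~\ref{prop:monotone} gives $B \le 2A$. Since $2^N - 3^r > 0$, the left-hand side is at most $0$.
	\item The right-hand side is strictly positive whenever $r \ge 1$. In that case the Christoffel word contains at least one one, so $A \ge 1$.
\end{itemize}
Hence the strict inequality holds as soon as $r \ge 1$.

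The case $r = 0$ should be treated separately. There $A = B = 0$, so both quotients vanish and the strict inequality fails. I would therefore read the statement as assuming $r \ge 1$, which is the only case relevant to cycles, and I would state this restriction explicitly.

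The main obstacle is a gap in the cited proposition. It is stated with a strict upper bound $B < 2A$, but its proof only establishes $B \le 2A$. This does not affect the argument, because the strictness is supplied by the positive term $A\,3^r$, so the weak inequality suffices. No other delicate step is expected. The irrationality remark guarantees that the denominators never vanish, and positivity follows directly from the assumption $2^N > 3^r$.
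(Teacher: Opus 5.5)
Your proposal is correct and follows the paper's argument. The paper likewise combines the upper bound of Proposition~\ref{prop:monotone} with the elementary inequality $2(2^N-3^r) < 2^{N+1}-3^r$, which is equivalent to your positive term $A\,3^r$; it simply chains the two fractions instead of cross-multiplying. Your two side remarks are also accurate: only $B \le 2A$ is needed, which is all the proposition's proof explicitly shows, and the paper tacitly assumes $r \ge 1$ when it multiplies by $C(d_{N,r}^{\mathrm{chr}}) > 0$, so the case $r=0$ is indeed excluded.
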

	
	\begin{proof}
		From Proposition~\ref{prop:monotone},
		$C(d_{N+1,r}^{\mathrm{chr}}) < 2\,C(d_{N,r}^{\mathrm{chr}})$,
		so
		\begin{equation*}
			\frac{C(d_{N+1,r}^{\mathrm{chr}})}{2^{N+1}-3^r}
			< \frac{2\,C(d_{N,r}^{\mathrm{chr}})}{2^{N+1}-3^r}.
		\end{equation*}
		Since $2(2^N - 3^r) = 2^{N+1} - 2 \cdot 3^r < 2^{N+1} - 3^r$ and $2^N > 3^r$ by hypothesis, it follows that
		\begin{equation*}
			\frac{2}{2^{N+1}-3^r} < \frac{1}{2^N - 3^r}.
		\end{equation*}
		Multiplying by $C(d_{N,r}^{\mathrm{chr}}) > 0$ and combining
		with the previous inequality yields \eqref{eq:ratio_monotone}.
	\end{proof}
	
	Corollary~\ref{cor:monotone} shows that as the length increases
	with the number of odd iterates fixed, the quantity associated
	with the possible existence of cycles decreases strictly.
	In particular, longer configurations are progressively less
	favourable for the existence of cycles.

   We are now in a position to establish the main result of the paper.
	
	\begin{thm}[Main theorem]\label{thm:main}
		For every $N \ge 1$ and $0 \le r \le N$,
		\begin{equation}\label{eq:main}
			\max_{d \in \mathcal{D}_{N,r}} C_{\min}(d) = C(d^{\mathrm{chr}}_{N,r}),
		\end{equation}
		where $d^{\mathrm{chr}}_{N,r}$ is the Christoffel word of length $N$ with $r$ ones. Moreover, the maximum is attained uniquely, up to rotation, at $d = d^{\mathrm{chr}}_{N,r}$.
	\end{thm}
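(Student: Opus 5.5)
The proof has two halves. First, every $d \in \mathcal{D}_{N,r}$ satisfies $C_{\min}(d) \le C(d^{\mathrm{chr}}_{N,r})$. Second, the Christoffel word itself attains this value, so $C_{\min}(d^{\mathrm{chr}}_{N,r}) = C(d^{\mathrm{chr}}_{N,r})$.

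For the upper bound, fix $d$ and choose a rotation $d^c = \tau^j(d)$ minimizing $\mu$ among rotations, as in Lemma~\ref{lem:balance}. By Theorem~\ref{thm:dc_to_chr}, $d^c$ is carried to $d^{\mathrm{chr}}_{N,r}$ by a finite chain of $10\to 01$ transpositions. By Proposition~\ref{prp:swap}, each transposition strictly increases $C$, so $C(d^c) \le C(d^{\mathrm{chr}}_{N,r})$, with equality only when the chain is empty, i.e.\ $d^c = d^{\mathrm{chr}}_{N,r}$. Since $C_{\min}(d) \le C(\tau^j(d)) = C(d^c)$, we get $C_{\min}(d) \le C(d^{\mathrm{chr}}_{N,r})$ for every $d$. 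A cleaner alternative avoids the chain: using $C(d^c)=\sum_k 2^{i^c_k-1}3^{r-k}$ and the pointwise inequality $i^c_k \le i^{\mathrm{chr}}_k$ of Theorem~\ref{teo:maxim}, monotonicity term by term gives the same bound, with equality iff all positions coincide.

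For uniqueness, suppose $C_{\min}(d) = C(d^{\mathrm{chr}}_{N,r})$. Then $C(d^{\mathrm{chr}}_{N,r}) \le C(d^c) \le C(d^{\mathrm{chr}}_{N,r})$, so equality holds in the termwise comparison. This forces $i^c_k = i^{\mathrm{chr}}_k$ for all $k$, hence $d^c = d^{\mathrm{chr}}_{N,r}$ and $d$ is a rotation of the Christoffel word. The degenerate cases $r=0$ and $r=N$ are trivial, since $\mathcal{D}_{N,r}$ is then a single word.

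The main obstacle is attainment: proving that $C(\tau^j(d^{\mathrm{chr}}_{N,r})) \ge C(d^{\mathrm{chr}}_{N,r})$ for every $j$. Neither the Christoffel word nor its rotations are produced by the $\mu$-minimizing construction, so this does not follow from the earlier results. My plan is to apply Theorem~\ref{teo:maxim} to $d = d^{\mathrm{chr}}_{N,r}$ itself. Lemma~\ref{lem:positions} shows that $d^{\mathrm{chr}}_{N,r}$ satisfies the prefix condition $\#\{\text{ones in first } k\} = \ceil{kr/N} \ge kr/N$. I would then show that every rotation $e = \tau^j(d^{\mathrm{chr}}_{N,r})$ has one-positions dominating the Christoffel positions, $i_k(e) \ge i^{\mathrm{chr}}_k$, for the following reason. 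Prefix sums of a rotation are differences $\ceil{(j+k)r/N} - \ceil{jr/N}$, and these are $\le \ceil{kr/N}$ by subadditivity of the ceiling. So each rotation has at most as many ones in every prefix as $d^{\mathrm{chr}}_{N,r}$, which is exactly position domination. Termwise monotonicity in $C=\sum_k 2^{i_k-1}3^{r-k}$ then gives $C(e) \ge C(d^{\mathrm{chr}}_{N,r})$, so $C_{\min}(d^{\mathrm{chr}}_{N,r}) = C(d^{\mathrm{chr}}_{N,r})$. Combined with the upper bound this proves \eqref{eq:main}.

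For this last step I would check the subadditivity inequality $\ceil{a+b} \le \ceil{a} + \ceil{b}$ carefully in the form used. I would also verify that prefix-count domination is equivalent to the positional inequality for the $k$-th one.
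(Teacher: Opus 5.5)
Your proof is correct. For the upper bound and for uniqueness it follows the paper. The paper carries $d^c$ to $d^{\mathrm{chr}}_{N,r}$ through the transposition chain of Theorem~\ref{thm:dc_to_chr} and Proposition~\ref{prp:swap}. Your termwise variant, using $C=\sum_k 2^{i_k-1}3^{r-k}$ together with $i_k^c\le i_k^{\mathrm{chr}}$ from Theorem~\ref{teo:maxim}, gives the same inequality and the same equality case without needing Theorem~\ref{thm:dc_to_chr}. You also state explicitly the step $C(d^{\mathrm{chr}}_{N,r})=C_{\min}(d)\le C(d^c)$ that uniqueness requires.

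You genuinely differ on attainment. The paper asserts, without proof, that $d^{\mathrm{chr}}_{N,r}$ is its own $\mu$-minimizing rotation. It then concludes that no rotation has smaller $C$, because the transposition chain starting from it is empty. But the chain machinery only bounds $C$ of the $\mu$-minimizing rotation from above. So even granting that assertion, it says nothing about $C$ on the other rotations.

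Your argument supplies exactly this missing lower bound. The first $k$ letters of $\tau^j(d^{\mathrm{chr}}_{N,r})$ contain $\lceil (j+k)r/N\rceil-\lceil jr/N\rceil\le\lceil kr/N\rceil$ ones. Hence $i_k(\tau^j(d^{\mathrm{chr}}_{N,r}))\ge i_k^{\mathrm{chr}}$ for every $k$, and $C$ is termwise increasing in the positions. Since $N\mu=\sum_k i_k$ is monotone in the same way, your argument also proves the paper's claim $(d^{\mathrm{chr}}_{N,r})^c=d^{\mathrm{chr}}_{N,r}$ as a byproduct.

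Two points for the write-up:
\begin{enumerate}
\item Treat the wrap-around case $j+k>N$ explicitly. There the count is $r-\lceil jr/N\rceil+\lceil (j+k-N)r/N\rceil$, which equals the same difference because $\lceil (m+N)r/N\rceil=\lceil mr/N\rceil+r$.
\item Drop the remark about applying Theorem~\ref{teo:maxim} to $d^{\mathrm{chr}}_{N,r}$. That would only give $i_k\le i_k^{\mathrm{chr}}$ for its $\mu$-minimizing rotation, which is the opposite inequality, and your argument does not use it.
\end{enumerate}
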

	
	\begin{proof}
		Let $d \in \mathcal{D}_{N,r}$ and let $d^c$ be the rotation of $d$ minimizing $\mu$ among all rotations, as in \eqref{eq:mumin}. By definition of $C_{\min}$ and the fact that $d^c$ is one of the rotations over which the minimum is taken,
		\begin{equation*}
			C_{\min}(d) \le C(d^c).
		\end{equation*}
		By Theorem~\ref{thm:dc_to_chr}, $d^c$ can be transformed into $d^{\mathrm{chr}}_{N,r}$ by a finite sequence of $10 \to 01$ transpositions. By Proposition~\ref{prp:swap}, each such transposition strictly increases the value of $C$. Therefore
		\begin{equation*}
			C(d^c) \le C(d^{\mathrm{chr}}_{N,r}),
		\end{equation*}
		with equality if and only if the sequence of transpositions is empty, that is, if and only if $d^c = d^{\mathrm{chr}}_{N,r}$.
        Combining both inequalities,
		\begin{equation*}
			C_{\min}(d) \le C(d^{\mathrm{chr}}_{N,r}),\quad \forall d\in\mathcal{D}_{N,r} \Longrightarrow \max_{d\in\mathcal{D}_{N,r}} C_{\min}(d) \le C(d^{\mathrm{chr}})
		\end{equation*}


        It is now easy to see that the Christoffel word $d^{\mathrm{chr}}_{N,r}$ is the rotation that minimizes $\mu$ within its rotation class, that is
        \begin{equation*}
            (d^{\mathrm{chr}}_{N,r})^{c} = d^{\mathrm{chr}}_{N,r},
        \end{equation*}
        and this follows directly from the previous lemmas.
        
        Moreover, by Theorem~6.3 and Proposition~5.2 we know that there exists, for any $\mu$-minimizing rotation $d^{c}$, a finite sequence of $10 \to 01$ transpositions transforming $d^{c}$ into $d^{\mathrm{chr}}_{N,r}$, each of which strictly increases $C$. In our particular case, starting from $(d^{\mathrm{chr}}_{N,r})^{c} = d^{\mathrm{chr}}_{N,r}$, this sequence of transpositions is empty, so there is no way to obtain a rotation of $d^{\mathrm{chr}}_{N,r}$ with a strictly smaller value of $C$. Consequently,
        \begin{equation*}
            C_{\min}\bigl(d^{\mathrm{chr}}_{N,r}\bigr) = \min_{0 \le j < N} C\bigl(\tau^{j}(d^{\mathrm{chr}}_{N,r})\bigr) = C\bigl(d^{\mathrm{chr}}_{N,r}\bigr),
        \end{equation*}
        which shows that the upper bound $C(d^{\mathrm{chr}}_{N,r})$ is actually attained.
	\end{proof}
	
	Theorem~\ref{thm:main} shows that Christoffel words are not only
	combinatorially balanced, but also constitute the unique extremal
	configurations for the functional $C_{\min}$. This establishes a
	direct link between the dynamics of the Collatz problem and
	classical structures in combinatorics on words, showing that
	optimal configurations are governed by balanced distributions of
	the ones. To the best of our knowledge, this is the first result
	identifying Christoffel words as extremal configurations for a
	functional arising directly from Collatz dynamics.
	
	\section{Bounds}
	
	In this section we use the optimization results obtained previously
	to derive arithmetic restrictions on the possible existence of
	periodic cycles. In particular, we show that the proportion between
	even and odd iterates imposes very rigid constraints on the dynamics.
	
	\begin{thm}\label{thm:NleII2r}
		Let $d \in \mathcal{D}_{N,r}$ be the parity word associated with a
		periodic orbit of the Collatz map. Then:
		\begin{enumerate}
			\item If $N = 2r$, the only periodic orbit with this proportion
			of even and odd iterates is the trivial one.
			\item There is no periodic orbit with $N > 2r$.
		\end{enumerate}
	\end{thm}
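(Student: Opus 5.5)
The plan is to combine the cycle identity \eqref{eq:cycle} with the main theorem and Corollary~\ref{cor:monotone}, and then to compute $C(d^{\mathrm{chr}}_{2r,r})$ explicitly.

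\textbf{Step 1: the minimum of the orbit.} Let $d\in\mathcal{D}_{N,r}$ be the parity word of a periodic orbit of period $N$. We may assume $r\ge 1$. Indeed, if $r=0$, then \eqref{eq:cycle} gives $x=0$, which is not a positive integer. Choosing the base point $\phi^{j}(x)$ gives the parity word $\tau^j(d)$, so \eqref{eq:cycle} applied to that base point gives
\begin{equation*}
\phi^j(x)=\frac{C(\tau^j(d))}{2^N-3^r},\qquad j=0,\dots,N-1 .
\end{equation*}
All elements of the orbit are positive, so the denominator must be positive, i.e.\ $2^N>3^r$. Taking the minimum over $j$, the least element $x_{\min}$ of the cycle is
\begin{equation*}
x_{\min}=\frac{C_{\min}(d)}{2^N-3^r}\le \frac{C(d^{\mathrm{chr}}_{N,r})}{2^N-3^r},
\end{equation*}
where the inequality is Theorem~\ref{thm:main}. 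This is the key bound. The only delicate point here is noting that each rotation really corresponds to an element of the orbit, so the minimum over rotations is the minimum element of the cycle.

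\textbf{Step 2: the case $N=2r$.} By Lemma~\ref{lem:positions}, the ones of $d^{\mathrm{chr}}_{2r,r}$ sit at positions $i_j^{\mathrm{chr}}=2(j-1)+1=2j-1$. Hence $d^{\mathrm{chr}}_{2r,r}=(1,0,1,0,\dots,1,0)$, and the geometric sum gives
\begin{equation*}
C(d^{\mathrm{chr}}_{2r,r})=\sum_{j=1}^{r}2^{2j-2}3^{r-j}=\frac{4^r-3^r}{4-3}=2^{2r}-3^r .
\end{equation*}
Step~1 then gives $x_{\min}\le 1$, so $x_{\min}=1$ and the cycle contains $1$. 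Since $\phi(1)=2$ and $\phi(2)=1$, the cycle is the trivial one, possibly traversed several times, which is consistent with $N=2r$. This proves part~1.

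\textbf{Step 3: the case $N>2r$.} For every $M\ge 2r$ we have $2^M\ge 4^r>3^r$, so the hypothesis of Corollary~\ref{cor:monotone} holds along the whole range. Iterating the corollary from $M=2r$ up to $M=N-1$ gives the strict chain
\begin{equation*}
\frac{C(d^{\mathrm{chr}}_{N,r})}{2^N-3^r}<\frac{C(d^{\mathrm{chr}}_{N-1,r})}{2^{N-1}-3^r}<\cdots<\frac{C(d^{\mathrm{chr}}_{2r,r})}{2^{2r}-3^r}=1 .
\end{equation*}
Combined with Step~1, this yields $x_{\min}<1$. That contradicts $x_{\min}$ being a positive integer, so no periodic orbit with $N>2r$ exists. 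This proves part~2.

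I do not expect a real obstacle, since the work is carried by Theorem~\ref{thm:main} and Corollary~\ref{cor:monotone}. The two things to check carefully are the explicit evaluation of the Christoffel word for $N=2r$, which rests on the closed form of Lemma~\ref{lem:positions}, and that the positivity condition $2^M>3^r$ needed by the corollary holds for every $M$ between $2r$ and $N$.
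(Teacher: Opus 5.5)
Your proof is correct and follows the paper's argument: you bound the cycle element by $C(d^{\mathrm{chr}}_{N,r})/(2^N-3^r)$ via Theorem~\ref{thm:main}, evaluate $C(d^{\mathrm{chr}}_{2r,r})=4^r-3^r$, and iterate Corollary~\ref{cor:monotone} for $N>2r$. Your small refinements are sound and make the argument slightly more self-contained: you check that $C_{\min}(d)/(2^N-3^r)$ really is the least element of the cycle, you dispose of $r=0$, and you close part~1 via $1\mapsto 2\mapsto 1$ rather than the uniqueness clause of the main theorem.
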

	
	\begin{proof}
		\begin{enumerate}
			\item \textbf{Case $N = 2r$.}
			The Christoffel word in this case is
			$d_{2r,r}^{\mathrm{chr}} = [10]^r$, and a direct computation
			gives
			\begin{equation*}
				C(d_{2r,r}^{\mathrm{chr}})
				= \sum_{i=1}^{r} 2^{2i-2}\,3^{r-i}
				= \frac{3^r}{4}\sum_{i=1}^{r}\left(\frac{4}{3}\right)^i
				= 4^r - 3^r.
			\end{equation*}
			By Theorem~\ref{thm:main},
			$C_{\min}(d) \le C(d_{2r,r}^{\mathrm{chr}}) = 4^r - 3^r$
			for every $d \in \mathcal{D}_{2r,r}$. Using \eqref{eq:cycle}
			with $2^N - 3^r = 4^r - 3^r$,
			\begin{equation*}
				x = \frac{C_{\min}(d)}{4^r - 3^r}
				\le \frac{4^r - 3^r}{4^r - 3^r} = 1.
			\end{equation*}
			Since $x \in \mathbb{N}$ and $x \ge 1$, we conclude $x = 1$,
			which corresponds to the trivial cycle. By
			Theorem~\ref{thm:main}, equality holds if and only if $d$ is
			a rotation of $[10]^r$, confirming that the trivial cycle is
			the only one.
			
			\item \textbf{Case $N > 2r$.}
			Write $N = 2r + n_0$ with $n_0 > 0$. Applying
			Corollary~\ref{cor:monotone} repeatedly and noting that
			$2^N > 3^r$ for all steps since $N/r > 2 > \log_2 3$,
			\begin{equation*}
				\frac{C(d_{N,r}^{\mathrm{chr}})}{2^N - 3^r}
				< \cdots
				< \frac{C(d_{2r,r}^{\mathrm{chr}})}{4^r - 3^r} = 1.
			\end{equation*}
			By Theorem~\ref{thm:main}, $C_{\min}(d) \le
			C(d_{N,r}^{\mathrm{chr}})$, so
			\begin{equation*}
				x = \frac{C_{\min}(d)}{2^N - 3^r}
				\le \frac{C(d_{N,r}^{\mathrm{chr}})}{2^N - 3^r} < 1,
			\end{equation*}
			contradicting $x \in \mathbb{N}$ with $x \ge 1$.
		\end{enumerate}
	\end{proof}
	
	Theorem~\ref{thm:NleII2r} shows that the density of odd iterates
	in a periodic orbit is strongly constrained. The bound $N \le 2r$
	is known in the literature \cite{Lagarias1985}; what is new here
	is that it follows directly from the extremality of Christoffel
	words established in Theorem~\ref{thm:main}. Together with the
	necessary condition
	\begin{equation*}
		\frac{N}{r} > \frac{\log 3}{\log 2} \approx 1.585,
	\end{equation*}
	this gives the sharp bound on the slope:
	\begin{equation}\label{eq:slope_bound}
		\frac{\log 3}{\log 2} < \frac{N}{r} \le 2.
	\end{equation}
	
	To obtain an explicit bound on $x$ in terms of $N/r$, we need
	to explicitly estimate $C(d_{N,r}^{\mathrm{chr}})$. This requires
	knowing the positions of the ones, which is given by Lemma~\ref{lem:positions}. The estimate in Theorem~\ref{thm:bound} will be
	sharp when the floor function in \eqref{eq:pos_formula} is close to the
	continuous value $(j-1)N/r$, which occurs precisely for
	Christoffel words.
	
	\begin{thm}\label{thm:bound}
		Let $d \in \mathcal{D}_{N,r}$ be the parity word associated with
		a periodic orbit of the Collatz map, with $N/r > \log_2 3$. Then
		there exists an element $x$ of the orbit satisfying
		\begin{equation}\label{eq:bound}
			x \le \frac{1}{2^{N/r} - 3}.
		\end{equation}
	\end{thm}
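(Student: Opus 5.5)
Choose the base point of the cycle to be the rotation realizing $C_{\min}(d)$. By \eqref{eq:cycle}, the corresponding orbit element is
\begin{equation*}
x = \frac{C_{\min}(d)}{2^N - 3^r},
\end{equation*}
and Theorem~\ref{thm:main} gives $x \le C(d^{\mathrm{chr}}_{N,r})/(2^N-3^r)$. It therefore remains to bound $C(d^{\mathrm{chr}}_{N,r})$ explicitly. Lemma~\ref{lem:positions} supplies the positions of the ones, so
\begin{equation*}
C(d^{\mathrm{chr}}_{N,r}) = \sum_{j=1}^{r} 2^{\floor{(j-1)N/r}}\, 3^{r-j}.
\end{equation*}
Set $\alpha = N/r$. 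Using $\floor{(j-1)\alpha} \le (j-1)\alpha$, each term is at most $2^{(j-1)\alpha}3^{r-j}$. This turns the sum into a geometric series:
\begin{equation*}
C(d^{\mathrm{chr}}_{N,r}) \le \sum_{j=1}^{r} 2^{(j-1)\alpha}3^{r-j} = \frac{2^{r\alpha} - 3^r}{2^\alpha - 3} = \frac{2^N - 3^r}{2^{\alpha}-3}.
\end{equation*}
The hypothesis $\alpha > \log_2 3$ guarantees $2^\alpha > 3$, so the ratio $q = 2^\alpha/3$ of the series exceeds $1$. Summing requires exactly this, and it also keeps the sign of $2^N - 3^r$ positive.

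Dividing by $2^N - 3^r > 0$ then yields $x \le 1/(2^{N/r}-3)$, which is \eqref{eq:bound}.

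The main point to check carefully is the summation of the geometric series. Factoring out $3^{r-1}$ gives
\begin{equation*}
3^{r-1}\sum_{j=0}^{r-1} q^{j} = 3^{r-1}\,\frac{q^r-1}{q-1}, \qquad q = \frac{2^\alpha}{3}.
\end{equation*}
This simplifies to $(2^{N}-3^r)/(2^\alpha-3)$, since $q^r 3^r = 2^{\alpha r} = 2^N$. The telescoping is exact, and no positivity issues arise under $\alpha > \log_2 3$.

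A second subtlety is that Theorem~\ref{thm:main} bounds $C_{\min}(d)$, not $C(d)$ for an arbitrary base point. This is why the statement asserts only the existence of some element $x$ of the orbit, namely the one at the $C_{\min}$-realizing rotation, rather than a bound for all elements.
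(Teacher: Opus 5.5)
Your proof is correct and follows the paper's argument step for step: take the orbit element $x = C_{\min}(d)/(2^N-3^r)$, bound it by $C(d^{\mathrm{chr}}_{N,r})/(2^N-3^r)$ via Theorem~\ref{thm:main}, write $C(d^{\mathrm{chr}}_{N,r})=\sum_{j} 2^{\floor{(j-1)N/r}}3^{r-j}$ using Lemma~\ref{lem:positions}, drop the floors, and sum the geometric series to $(2^N-3^r)/(2^{N/r}-3)$. One small imprecision: the closed form of the geometric sum only needs $q\neq 1$; what the hypothesis $N/r>\log_2 3$ actually provides is $2^N-3^r>0$ (equivalently $2^{N/r}-3>0$), which is what lets you divide without reversing the inequality.
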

	
	\begin{proof}
		By Theorem~\ref{thm:main} and \eqref{eq:cycle},
		\begin{equation}
        \label{eq:bound_aux_1}
			x = \frac{C_{\min}(d)}{2^N - 3^r}
			\le \frac{C(d_{N,r}^{\mathrm{chr}})}{2^N - 3^r}.
		\end{equation}
		Using Lemma~\ref{lem:positions} and \eqref{eq:C},
        \begin{equation*}
			C(d_{N,r}^{\mathrm{chr}})
			= \sum_{j=1}^{r} 2^{i_j^{\mathrm{chr}}-1}\,3^{r-j}
			= \sum_{j=1}^{r} 2^{\floor{(j-1)N/r}}\,3^{r-j}.
		\end{equation*}
		Since $\floor{(j-1)N/r} \le (j-1)N/r$, we have
		$2^{\floor{(j-1)N/r}} \le 2^{(j-1)N/r}$, and therefore
		\begin{equation*}
			C(d_{N,r}^{\mathrm{chr}})
			\le \sum_{j=1}^{r} 2^{(j-1)N/r}\,3^{r-j}
			= 3^{r-1} \sum_{j=1}^{r} \left(\frac{2^{N/r}}{3}\right)^{j-1}.
		\end{equation*}
		This is a geometric series with ratio $q = 2^{N/r}/3 > 1$
		(since $N/r > \log_2 3$), giving
		\begin{equation*}
			C(d_{N,r}^{\mathrm{chr}})
			\le
            3^{r-1} \sum_{j=1}^{r} q^{j-1}
			= 3^{r-1} \frac{q^r - 1}{q - 1}
			= 3^{r-1} \frac{2^N/3^r - 1}{2^{N/r}/3 - 1}
			= \frac{2^N - 3^r}{2^{N/r} - 3}.
		\end{equation*}
		Using this bound in \eqref{eq:bound_aux_1} we get \eqref{eq:bound}.
	\end{proof}
	
	The bound \eqref{eq:bound} translates the combinatorial structure
	of parity sequences into an explicit arithmetic constraint on the
	minimum element of a periodic orbit. Since $r \mapsto 2^{N/r}$ is
	decreasing in $r$, the bound \eqref{eq:bound} is largest when $r$
	is largest, that is, when $r$ takes its maximum admissible value
	\begin{equation*}
		r_0 = \floor{N\,\frac{\log 2}{\log 3}}.
	\end{equation*}

	\begin{cor}\label{cor:universal}
		For every $N \ge 1$, the minimum element of any periodic orbit of
		length $N$ satisfies
		\begin{equation}\label{eq:universal_bound}
			x \le \frac{1}{2^{N/r_0} - 3},
			\qquad r_0 = \floor{N\,\frac{\log 2}{\log 3}},
		\end{equation}
		a bound depending only on $N$.
	\end{cor}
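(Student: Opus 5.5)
The plan is to derive Corollary~\ref{cor:universal} from Theorem~\ref{thm:bound} by a monotonicity argument in $r$. Fix $N$ and a nontrivial periodic orbit of length $N$ with $r$ odd iterates, and let $d\in\mathcal{D}_{N,r}$ be its parity word.

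The first step is to locate $r$ in the admissible range. Since $x\ge 1$ and the numerator $C(d)$ is positive for $r\ge1$, equation \eqref{eq:cycle} forces $2^N>3^r$. (The case $r=0$ is excluded, because then $\phi^N(x)=x/2^N\neq x$.) Because $\log_2 3$ is irrational, $2^N>3^r$ is equivalent to $r<N\log 2/\log 3$, and hence to $r\le r_0=\floor{N\log2/\log3}$. In particular $N/r>\log_2 3$, so the hypothesis of Theorem~\ref{thm:bound} holds. That theorem then supplies an element of the orbit with $x\le 1/(2^{N/r}-3)$. This element is the one realizing $C_{\min}(d)$; the minimum element is at most it, so the bound also holds for the minimum element.

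The second step is monotonicity. The map $r\mapsto 2^{N/r}$ is strictly decreasing on $(0,\infty)$. Since $1\le r\le r_0$, we get $2^{N/r}\ge 2^{N/r_0}$. We also have $2^{N/r_0}>3$, because $r_0<N\log2/\log3$ strictly, again by irrationality. Therefore $0<2^{N/r_0}-3\le 2^{N/r}-3$, and consequently $1/(2^{N/r}-3)\le 1/(2^{N/r_0}-3)$, which gives \eqref{eq:universal_bound}.

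The main obstacle is the degenerate small-$N$ case $r_0=0$, that is, $N=1$, where the right-hand side is undefined. I would handle it separately: a cycle of length $1$ would be a fixed point of $\phi$, and none exists in $\mathbb{N}$. For $N=1$ the statement is therefore vacuous and should be read for $r_0\ge1$. Apart from this, the argument is a direct chain of Theorem~\ref{thm:bound} with the monotonicity of $2^{N/r}$, and the only points needing care are the strict inequality $2^{N/r_0}>3$ and the fact that the bound transfers to the minimum element.
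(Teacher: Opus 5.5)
Your proof is correct and follows the paper's route: you apply Theorem~\ref{thm:bound} with the orbit's own $r$, then use that $r\mapsto 1/(2^{N/r}-3)$ is increasing on $1\le r\le r_0$, so the worst case is $r=r_0$. Your version is more careful than the paper's one-line proof in three places: you derive $r\le r_0$ and $2^{N/r_0}>3$ explicitly, you state the monotonicity in the right direction (the paper calls $(2^{N/r}-3)^{-1}$ decreasing in $r$), and you deal with the degenerate case $N=1$, where $r_0=0$.
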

	
	\begin{proof}
		Apply Theorem~\ref{thm:bound} with $r = r_0$, noting that
		$r \mapsto (2^{N/r} - 3)^{-1}$ is decreasing since
		$r \mapsto 2^{N/r}$ is decreasing, so the bound is maximized
		at the largest admissible $r$, which is $r_0$.
	\end{proof}

    Bound \eqref{eq:universal_bound} is particularly useful for computational searches, as it provides an a priori upper bound on the minimum element of any cycle of length $N$, independently of the number of odd iterates. Table \ref{tab:universal} lists the values of this universal bound for a selection of periods $N \le 485$.
	
	\begin{table}[ht]
		\centering
		\caption{Universal bound \eqref{eq:universal_bound} on the
			minimum element of any periodic orbit of length $N$, for
			selected values of $N$. Here $r_0 = \lfloor N\log 2/\log 3
			\rfloor$ is the worst-case number of odd iterates. The
			non-monotone behaviour of the bound is a consequence of the
			irregular approximation of $\log_2 3$ by rationals. The last two rows correspond to values of $N$ for which
			$r_0/N$ is an exceptionally good rational approximation
			of $\log 2/\log 3$, producing very large bounds and
			illustrating the divergence of the universal bound as
			$N/r_0 \to \log_2 3^+$.}
		\label{tab:universal}
		\smallskip
		\begin{tabular}{rrrr}
			\hline\noalign{\smallskip}
			$N$ & $r_0$ & $N/r_0$ & $\dfrac{1}{2^{N/r_0}-3}$\\[6pt]
			\hline\noalign{\smallskip}
			10 &   6 & $1.6667$ & $      5.721$ \\
			20 &  12 & $1.6667$ & $      5.721$ \\
			30 &  18 & $1.6667$ & $      5.721$ \\
			40 &  25 & $1.6000$ & $     31.814$ \\
			50 &  31 & $1.6129$ & $     17.045$ \\
			60 &  37 & $1.6216$ & $     12.952$ \\
			70 &  44 & $1.5909$ & $     80.703$ \\
			80 &  50 & $1.6000$ & $     31.814$ \\
			90 &  56 & $1.6071$ & $     21.515$ \\
			100 &  63 & $1.5873$ & $   205.426$ \\
			150 &  94 & $1.5957$ & $    44.435$ \\
			200 & 126 & $1.5873$ & $   205.426$ \\
			306 & 193 & $1.5855$ & $   907.656$ \\
			485 & 306 & $1.5850$ & $ 99780.791$ \\
			\noalign{\smallskip}\hline
		\end{tabular}
	\end{table}
	
	\section{Discussion}
	
	The results obtained allow us to reinterpret the problem of the
	existence of nontrivial cycles of the Collatz map in purely
	combinatorial terms, reducing it to the study of binary words of
	fixed length and prescribed density.
	
	The main theorem shows that, within each class $\mathcal{D}_{N,r}$,
	the Christoffel word $d^{\mathrm{chr}}_{N,r}$ is the unique
	maximizer, up to rotation, of $C_{\min}(d)$. This establishes a
	direct connection between Collatz dynamics and the classical theory
	of balanced words, where Christoffel words appear as extremal
	configurations characterized by an optimal distribution of symbols.
	From this perspective, the existence of periodic cycles is
	constrained by
	\begin{equation*}
		x = \frac{C_{\min}(d)}{2^N - 3^r}
		\le \frac{C(d^{\mathrm{chr}}_{N,r})}{2^N - 3^r},
	\end{equation*}
	which reduces the analysis essentially to understanding the
	behaviour of Christoffel words.
	
	As a first consequence, no cycle can satisfy $N > 2r$, and the
	critical case $N = 2r$ corresponds exclusively to the trivial
	cycle. As noted in Section~8, the bound $N \le 2r$ is known in
	the literature \cite{Lagarias1985}; the contribution of the
	present work is to derive it as a direct consequence of the
	extremality of Christoffel words, placing it within a broader
	combinatorial framework. Combined with the necessary condition
	$N/r > \log_2 3$, this gives the sharp constraint
	\eqref{eq:slope_bound} on the admissible slopes.
	
	The explicit bound
	\begin{equation*}
		x \le \frac{1}{2^{N/r} - 3}
	\end{equation*}
	provides a direct relation between the length of the orbit and
	the size of its minimum element, giving effective control over
	the possible candidates for cycles in terms of the parameters
	$(N, r)$. The worst case occurs when $r \approx N \log 2 / \log 3$,
	which coincides with the critical slope arising naturally in
	heuristic models of the Collatz problem
	\cite{Lagarias1985,KontorovichLagarias2010}, and leads to the
	universal bound of Corollary~\ref{cor:universal}.
	
	From a conceptual point of view, these results show that the
	combinatorial structure of parity sequences imposes strong
	restrictions on the possible existence of cycles. In particular,
	the compatibility between the partial order induced by
	$10 \to 01$ transpositions and the structure of Christoffel words
	suggests that extremal configurations exhibit a pronounced
	structural rigidity: any word that is not a rotation of a
	Christoffel word is strictly suboptimal for $C_{\min}$, and
	therefore cannot correspond to a cycle with parameters close to
	the critical values.
	
	Several directions remain open. First, the bound \eqref{eq:bound}
	could potentially be sharpened by a more precise analysis of the
	gap between $\floor{(j-1)N/r}$ and $(j-1)N/r$, which depends on
	the continued fraction expansion of $r/N$. Second, the role of
	Sturmian words --- the aperiodic analogues of Christoffel words,
	corresponding to irrational slopes --- in the study of unbounded
	orbits deserves further investigation: if the density of odd
	iterates along an orbit converges to an irrational value, the
	associated parity sequence approaches a Sturmian word, and the
	extremal properties of such sequences may impose constraints on
	the growth of the orbit. Third, the framework developed here
	could be extended to variants of the Collatz map, such as the
	$ax + b$ family, or to other discrete dynamical systems with
	symbolic structure admitting a similar functional $C(d)$.

After this work was completed, we became aware of the recent article by Kevin Knight \cite{Knight2026}, which studies rational Collatz cycles and identifies upper Christoffel words in that setting. Although both works highlight the role of Christoffel words in Collatz dynamics, they address different but complementary questions.

Knight considers the case of rational Collatz cycles with odd denominators, showing that upper Christoffel words parametrize the high cycles of prescribed length and odd density, and proving that none of these high cycles can consist entirely of integers. By contrast, we study the classical accelerated Collatz map on $\mathbf{N}$ and formulate a discrete optimization problem for the rotation-invariant functional $C_{\min}$ on the space $D_{N,r}$ of parity words.

Our Theorem~\ref{thm:main} proves that, for every admissible pair $(N,r)$, the corresponding Christoffel word is the unique maximizer of $C_{\min}$ up to rotation. Theorem~\ref{thm:NleII2r} then translates this extremal property into explicit bounds on the existence and size of integer cycles. Together with Knight's non-integrality result for rational high cycles, our results show that, whenever $\frac{N}{r}\in\left(\log_2 3,2\right]$, the unique extremal parity pattern cannot correspond to an entirely integer cycle. Consequently, any hypothetical integer cycle must arise from a strictly suboptimal parity pattern, to which our upper bounds impose additional constraints.
	
\end{document}